\def \sign{\mathop{\rm sign}\nolimits}
\def \dim{\mathop{\rm dim}\nolimits}
\def \im{\mathop{\rm Im}\nolimits}
\def \dis{\mathop{\rm Dis}\nolimits}
\def \C{{\mathbb C}}
\def \Z{{\mathbb Z}}
\def \N{{\mathbb N}}
\def \AA{{\cal A}}
\newtheorem{theorem}{Theorem}[section]
\newtheorem{lemma}[theorem]{Lemma}
\newtheorem{remarks}[theorem]{Remarks}
\newtheorem{definition}[theorem]{Definition}
\newenvironment{proof}
 {\begin{trivlist} \item[\hskip \labelsep {\bf Proof.}]}
 {\hfill$\Box$\end{trivlist}}
\newenvironment{lproof}[1]
 {\begin{trivlist} \item[\hskip \labelsep {\bf Proof (#1).}]}
 {\hfill$\Box$\end{trivlist}}
\begin{document}
\renewcommand{\theenumi}{(\roman{enumi})}
\normalsize

\title{Homotopy Type of Disentanglements of Multi-germs} 
\author{Kevin Houston \\
School of Mathematics \\ University of Leeds \\ Leeds, LS2 9JT, U.K. \\
e-mail: k.houston@leeds.ac.uk \\
http://www.maths.leeds.ac.uk/$\sim $khouston/
}
\date{\today }
\maketitle
\begin{abstract}
For a complex analytic map $f$ from $n$-space to $p$-space with $n<p$ and with an isolated instability at the origin, the disentanglement of $f$ is a local stabilization of $f$ that is analogous to the Milnor fibre for functions. 

For mono-germs it is known that the disentanglement is a wedge of spheres of possibly varying dimensions. In this paper we give a condition that allows us to deduce that the same is true for a large class of multi-germs.

AMS Mathematics Subject Classification 2000 :  14B07, 32S05, 32S30
\end{abstract}

\section{Introduction}
For a complex analytic map-germ $f:(\C ^n,S)\to (\C ^p,0)$ where $S$ is a finite set of points and where the origin in $(\C ^p,0)$ has an isolated instability we can find a nearby stable map which we can view as a stabilization of $f$. We call the discriminant of this stabilization the disentanglement of $f$. It is analogous to the Milnor fibre of an isolated complete intersection singularity. It is well known that such a Milnor fibre is homotopically equivalent to a wedge of spheres. In the case of disentanglements it is known that for $n\geq p-1$ that the disentanglement is homotopically a wedge of spheres of dimension $p-1$, see \cite{dm,vancyc}. (These references give the statements and proofs for mono-germs but the multi-germ proof is practically the same.)

For mono-germs with $n<p-1$ it was shown in \cite{loctop} that the disentanglement is homotopically a wedge of spheres but the spheres can be of different dimensions. An outstanding problem which seems to be much harder is to describe the topology for the multi-germ case. In this paper we show in Theorem~\ref{dis_homology} that the integer homology of the disentanglement is free abelian and hence one could conjecture that in analogy with the other cases that the disentanglement is homotopically equivalent to a wedge of spheres. Using a rather nice trick we show in Theorem~\ref{mainthm} that for a special but actually quite large class of maps this is indeed true.

\section{Disentanglements}
\label{sec:dis}
The details of the following definition of disentanglement can be found in \cite{mapfib}. The theorems there are stated for mono-germs but the extension to multi-germs is straightforward.
Suppose $S$ is a finite subset of $\C ^n$ and that $f:(\C ^n,S)\to (\C ^p,0)$, $n<p$, is a map-germ with an isolated instability at $0\in (\C^p,0)$ (equivalently, $f$ is finitely $\AA $-determined). Let $F:(\C ^n\times \C ^b,S\times 0)\to (\C ^p\times \C^b,0)$ be a versal unfolding, so that 
$F$ has the form $F(x,t)=(f_t(x),t)$. 
Let $\Sigma $ be the bifurcation space in the unfolding parameter space $\C ^b $, i.e., points such that the map $f_t:\C ^n \to \C ^p $ is unstable.
For corank 1 maps and maps in the nice dimensions the set $\Sigma $ is a proper subvariety of $\C ^b$ and so $\C^b \backslash \Sigma $ is connected. In other cases we relax our condition and ask that $f_t$ is topologically stable instead. In this case we also have that $\Sigma $ is a proper subvariety of $\C ^b$.

Given a Whitney stratification of the image of $f$ in $\C ^p $ there exists $\epsilon _0>0$ such that for all $0<\epsilon \leq \epsilon _0$, the real $(2p-1)$-sphere centred at $0$ of radius $\epsilon $ is transverse to the strata of the image of $f$.

Consider the map $f_t|f_t^{-1}(B_\epsilon )$ where $B _\epsilon $ is the closed ball of radius $\epsilon $ centred at $0$ with $\epsilon \leq \epsilon _0$ and $t\in \C^b \backslash \Sigma $. This stable map is called {\em{the disentanglement map}} of $f$ and its image 
is the {\em{disentanglement}} of $f$, denoted $\dis (f)$. This is independent of sufficiently small $\epsilon $ and $t$.
To ease notation we will write $\widetilde{f}$ rather than $f_t|f_t^{-1}(B_\epsilon )$. As $f^{-1}(0)$ is a finite set we can assume that $\widetilde{f}$ has the form $\widetilde{f}:\coprod_{j=1}^{|S|}  U_j \to \C ^p$ where $U_j$ is  a contractible open set.

In the case of $n\geq p$ a similar construction can be made where instead of the image of $f_t$ we use its discriminant.

The standard definition of the multiple point spaces of a map is the following.
\begin{definition}
Suppose that $f:X\to Y$ is a continuous map of topological spaces. Then 
the {\em{$k$th multiple point space of $f$}}, denoted $D^k(f)$, is the set 
\[
D^k(f):={\rm{closure}} \{ (x_1,\dots ,x_k)\in X^k | f(x_1)=\dots
= f(x_k), {\mbox{ such that }} x_i\neq x_j , i\neq j \} .
\] 
\end{definition}
The group of permutations on $k$ objects, denoted $S_k$, acts on $D^k(f)$
in the obvious way: permutation of copies of $X$ in $X^k$.

Let $d(f)=\sup \{ k \,  | \, D^k(\widetilde{f}) \neq \emptyset \}$ and let $s(f)$ be the number of branches of $f$ through the origin $0$ in $(\C ^p,0)$, i.e., the cardinality of $S$.

We can now generalize Corollary~4.8 of \cite{loctop} to the case of multi-germs. A version for rational cohomology for corank $1$ multi-germs was given in \cite{excellent}.
\begin{theorem}
\label{dis_homology}
Suppose that $f:(\C ^n,S)\to (\C ^p,0)$, $n<p$, has an isolated instability at $0\in (\C^p,0)$. Then, $\widetilde{H}_*(\dis (f);\Z )$ is free abelian and has non-trivial
groups possible only in dimensions $p-(p-n-1)k-1$ for all $2\leq k \leq d(f)$ and if $s(f)>d(f)$, then in
$\widetilde{H}_{d(f)-1}(\dis (f);\Z )$.

Furthermore, $H_0(\dis (f);\Z )=\Z$, i.e., $\dis (f)$ is connected.
\end{theorem}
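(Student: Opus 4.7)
The plan is to apply the image computing spectral sequence (ICSS) to the stable disentanglement map $\widetilde{f}\colon\coprod_{j=1}^{|S|} U_j \to \C^p$. This yields a first-quadrant spectral sequence
\[
E^1_{k,q} \;=\; H_q\bigl(D^{k+1}(\widetilde{f});\Z\bigr)_{\text{alt}} \;\Longrightarrow\; H_{k+q}\bigl(\dis(f);\Z\bigr),
\]
whose $E^1$-terms are the alternating parts of the homology of the multiple point spaces under the natural $S_{k+1}$-action, and which is supported in columns $0\leq k\leq d(f)-1$ because $D^m(\widetilde{f})=\emptyset$ for $m>d(f)$. The strategy is to show that the $E^1$-page is free abelian and concentrated on two lines whose total degrees are precisely those in the statement, after which the theorem follows by standard spectral-sequence arguments.

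Since $\widetilde{f}$ is stable and $n<p$, each non-empty $D^m(\widetilde{f})$ is a smooth Stein manifold of complex dimension $n-(m-1)(p-n)$, so by Hamm's Stein analogue of the Andreotti-Frankel theorem it has the homotopy type of a CW-complex of real dimension at most $n-(m-1)(p-n)$, with top integer homology free abelian. The central and most delicate step is to strengthen this to: $H_q\bigl(D^m(\widetilde{f});\Z\bigr)$ is free abelian and non-zero only in degrees $0$ and $n-(m-1)(p-n)$. For mono-germs this is precisely Corollary~4.8 of \cite{loctop}, proved by induction on $m$ via the forgetful projections $D^m\to D^{m-1}$ together with a Milnor-fibration argument. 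To extend it to multi-germs I plan to decompose
\[
D^m(\widetilde{f}) \;=\; \bigsqcup_{\phi} D^m(\widetilde{f})_{\phi}
\]
over multi-indices $\phi\colon\{1,\dots,m\}\to\{1,\dots,|S|\}$ recording which branch each of the $m$ points inhabits. Pieces with all coordinates of $\phi$ distinct are transverse fibre products of the mono-germ disentanglements $\widetilde{f}_{j_i}$, cut out by $p(m-1)$ equations on the $mn$-dimensional ambient Stein space, so they are Milnor fibres of the isolated complete intersection singularities determined by the component equations; pieces with repetitions restrict to mono-germ multiple-point spaces to which \cite{loctop} applies directly. Since $S_m$ permutes these pieces, the alternating summand is a direct sum over orbits, preserving both free-abelianness and the degree concentration. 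I expect this reduction, and in particular the verification that the mixed fibre products really are ICISes with the expected Milnor-fibre topology, to be the main technical obstacle.

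Granted this, the $E^1$-page is free abelian and supported on the row $q=0$ together with the diagonal $q=n-k(p-n)$. The diagonal contributes to $H_*(\dis(f);\Z)$ in total degrees $n-k(p-n-1)$, which under the reindexing $m=k+1\geq 2$ become exactly $p-(p-n-1)m-1$, as in the theorem. Because every $E^1$-entry and every differential is a map of free abelian groups, the $E_\infty$ filtration quotients of $H_*(\dis(f);\Z)$ are free abelian, yielding the first assertion. The bottom-row entry $E^\infty_{d(f)-1,0}$ survives exactly when the alternating $H_0$ of $D^{d(f)}(\widetilde{f})$ is non-trivial, which one checks is equivalent to the condition $s(f)>d(f)$. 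Finally, $H_0(\dis(f);\Z)=\Z$ follows by identifying the cokernel of the $d^1$ emanating from $E^1_{1,0}$ with the $0$-th homology of the nerve of the cover of $\dis(f)$ by its branch images, which is connected whenever distinct branches of $\dis(f)$ share a multiple point.
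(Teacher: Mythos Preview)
Your approach differs from the paper's in a way that creates a genuine gap. The paper does \emph{not} run the absolute ICSS of $\widetilde{f}$; instead it applies the relative ICSS to the pair $(F',\widetilde{f})$, where $F'$ is a one-parameter unfolding. The point of introducing $F'$ is that one can show, via a Milnor-fibration argument, that $H^{alt}_i(D^k(F'),D^k(\widetilde{f});\Z)=0$ for $i\leq \dim_\C D^k(\widetilde{f})$, while Stein-type vanishing for both spaces kills it for $i>\dim_\C D^k(\widetilde{f})+1$. Thus each relative column is concentrated in a \emph{single} degree and the sequence collapses at $E_2$; the bottom row is then controlled by the explicitly known ICSS of $F'$ (its $E_1$ row is the Koszul-like complex with $E_1^{r,0}\cong\Z^{\binom{s(f)}{r+1}}$), and comparison with that yields the extra term in degree $d(f)-1$ precisely when $s(f)>d(f)$.

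Your proposal instead rests on the claim that $H_q(D^m(\widetilde{f});\Z)$ (or its alternating part) is free abelian and concentrated in degrees $0$ and $n-(m-1)(p-n)$. This is strictly stronger than anything the paper establishes or cites: from \cite{loctop} one only gets vanishing of $H^{alt}_q$ for $q>\dim_\C D^m(\widetilde{f})$ and freeness in the top degree, not vanishing in the intermediate range. (Corollary~4.8 of \cite{loctop} is the mono-germ version of the theorem you are trying to prove---it is about $\dis(f)$, not about the multiple point spaces.) Your proposed argument for this concentration has several problems. First, for a stable map of arbitrary corank the closure in the definition of $D^m$ need not be smooth, so the ``smooth Stein manifold'' assertion is unjustified. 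Second, your branch decomposition by $\phi$ only treats the two extreme cases: for a mixed index such as $\phi=(1,1,2)$ the piece is a fibre product of a mono-germ double-point space with another branch, neither a mono-germ $D^k$ nor a transverse intersection of images, and you give no argument for its homology. Third, even for all-distinct $\phi$ you have not shown that the relevant fibre product is the Milnor fibre of an ICIS; this requires checking that the germ of $D^m(f)_\phi$ at the origin has an isolated singularity, which you do not address.

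Finally, your handling of the bottom row is incomplete. In the absolute ICSS of $\widetilde{f}$ every column $0\le k\le d(f)-1$ contributes an $H^{alt}_0$ term, and you must compute the $d_1$-homology of that entire row and rule out later differentials hitting it from the diagonal; asserting that survival at $(d(f)-1,0)$ ``is equivalent to $s(f)>d(f)$'' is exactly the content that the paper extracts from the comparison with the ICSS of $F'$, and it needs a proof. The clean way around all of these difficulties is the paper's: pass to the relative sequence for $(F',\widetilde{f})$.
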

If $p=n+1$, then a simple corollary of this is that the only non-trivial reduced homology groups 
occur in dimension $p-1$. This is known for mono-germs since the disentanglement is homotopically equivalent to a wedge of spheres, see \cite{vancyc}.

Suppose $X$ is a topological space with the homotopy type of a CW-complex
such that $S_k$ acts cellularly, that is, open cells go to open cells. 
Whitney stratified spaces can be triangulated so that they are CW-complexes. Furthermore, our multiple point spaces will be, generally speaking, Whitney stratified spaces such that $S_k$ acts cellularly on the CW-complex, see \cite{loctop}.  

Let $C_*(X;\Z )$ denote the cellular chain complex of $X$.
\begin{definition}
The alternating chain complex $C_*^{alt}(X;\Z )$ is the subcomplex of
 $C_*(X;\Z )$ given by
\[
C_n^{alt}(X;\Z ) := \{ c\in C_n(X;\Z ) | \sigma c=\sign (\sigma )c \mbox{ for all }
\sigma \in S_k \}.
\]
The alternating homology of $X$ is the homology of the complex 
$C_*^{alt}(X;\Z )$ and
is denoted $H^{alt} _*(X;\Z ) $.
\end{definition}
We can use the alternating homology of multiple point spaces to calculate the homology of the image of a finite and proper map.
\begin{theorem}
\label{icssthm}
Let $f:X\to Y$ be a finite and proper subanalytic map and let $Z$ be a (possibly empty) subanalytic subset of $X$ such that $f|Z$ is also proper. Then, there exists a spectral sequence
\[
E_1^{r,q}(f,f|Z)=  H^{alt}_q(D^{r+1}(f), D^{r+1}(f|Z);\Z ) \Rightarrow
H_*(f(X),f(Z);\Z ) ,
\]
where the differential is induced from the natural map 
$\varepsilon _{r+1,r}:D^{r+1}(f)\to D^r(f)$ given by 
$\varepsilon _{r+1,r} (x_1, x_2, \dots , x_r, x_{r+1}) = (x_1, x_2, \dots , x_r)$.
\end{theorem}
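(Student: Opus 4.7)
The plan is to derive the spectral sequence from a filtration of the relative cellular chain complex $C_*(f(X), f(Z))$ by the number of preimages, then identify the associated graded with alternating chains on the multiple point spaces. First, using a subanalytic triangulation theorem (Hironaka, Hardt), I would fix compatible CW structures on $X$, $Z$, the image $Y = f(X)$, $f(Z)$, and the products $X^k$, such that $f$ is cellular, $Z$ is a subcomplex of $X$, and $S_k$ acts cellularly on the induced CW structure on $D^k(f) \subset X^k$. This makes $C_*^{alt}(D^k(f), D^k(f|Z); \Z )$ a well-defined subcomplex of the cellular chain complex, and ensures $\varepsilon_{r+1,r}$ is cellular and equivariant for the embedding $S_r \hookrightarrow S_{r+1}$.

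Next, I would introduce a decreasing filtration of $Y$ by closed subcomplexes $F_r$, where $F_r$ is the closure of the locus in $Y \setminus f(Z)$ having at least $r+1$ preimages, together with $f(Z)$. Since $f$ is proper and finite, this filtration has finite length on compacta and induces the standard spectral sequence of a filtered complex converging to $H_*(f(X), f(Z); \Z)$.

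The heart of the proof is the identification of $E_1^{r,q}$ with $H_q^{alt}(D^{r+1}(f), D^{r+1}(f|Z); \Z)$. An open cell of $F_r \setminus F_{r+1}$ is a cell $\tau$ of $Y$ with exactly $r+1$ preimages in $X \setminus Z$, and its preimage in $D^{r+1}(f)$, away from the image of $\varepsilon_{r+2, r+1}$ and from $D^{r+1}(f|Z)$, is a free $S_{r+1}$-orbit of $(r+1)!$ open cells. Antisymmetrizing over this orbit gives a canonical generator of the alternating chains, producing a chain isomorphism between the relative cellular chain complex of $F_r$ modulo $F_{r+1}$ (and $f(Z)$) and the alternating relative cellular chain complex of $(D^{r+1}(f), D^{r+1}(f|Z))$ modulo chains supported on the image of $\varepsilon_{r+2, r+1}$. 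Taking homology then yields the claimed $E_1$ page, while the differential $d_1$ records the collapsing of two of the $r+1$ preimages as one crosses from $F_r \setminus F_{r+1}$ into $F_{r-1} \setminus F_r$; under antisymmetrization this is exactly the map induced by $\varepsilon_{r+1, r}$, with the alternating sum over which coordinate to forget automatically handled by the sign action.

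The main obstacle is the $E_1$ identification: producing a triangulation compatible with the preimage-multiplicity stratification on every $D^k(f)$ simultaneously, and then checking, near the fat diagonals $\{x_i = x_j\}$ in $X^k$, that antisymmetrization exactly cancels contributions from higher-multiplicity strata and matches cells of the image with correct orientations. The relative setting with $Z$ adds no essential difficulty once the absolute case is handled, since $f|Z$ restricts all constructions coherently and the short exact sequence $0 \to C_*^{alt}(D^k(f|Z)) \to C_*^{alt}(D^k(f)) \to C_*^{alt}(D^k(f), D^k(f|Z)) \to 0$ splits cellularly.
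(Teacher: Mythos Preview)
The paper does not contain its own proof of this theorem: immediately after the statement it simply says ``A proof is found in \cite{icss}'' and moves on. So there is nothing in the present paper to compare your argument against line by line.

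That said, your outline is in the spirit of the cited references \cite{go,icss}, though not identical to them. Goryunov and Houston build a \emph{semi-simplicial resolution} of the image: one forms the geometric realisation of the semi-simplicial space $[k]\mapsto D^{k+1}(f)$ (with face maps the various $\varepsilon$), shows this maps to $f(X)$ with contractible (in fact simplicial) fibres, and then the spectral sequence is just the skeletal filtration of the realisation; the alternating homology appears because the normalised chain complex of a semi-simplicial object with the extra $S_k$-symmetry is exactly the alternating subcomplex. Your approach instead filters $f(X)$ directly by preimage multiplicity. These are morally dual pictures and both lead to the same sequence, but the semi-simplicial route handles the ``fat diagonal'' bookkeeping automatically, whereas in your sketch this is precisely the step you flag as the main obstacle: matching $H_*(F_r,F_{r+1})$ with $H^{alt}_*(D^{r+1}(f),D^{r+1}(f|Z))$ requires you to show that alternating chains supported over $F_{r+1}$ (including those hitting the diagonal closure) contribute trivially, and your paragraph on this is more of an assertion than an argument. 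If you want to carry your version through, the cleanest fix is to exhibit an explicit chain map from $C^{alt}_*(D^{r+1}(f))$ to $C_*(f(X))$ landing in the $r$th filtration step and inducing the isomorphism on associated graded; this is essentially what the semi-simplicial machinery packages for you.
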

A proof is found in \cite{icss}. The precise details of the differential will not be required as our spectral sequences will be sparse. A spectral sequence for a single map rather than a pair also exists if we take $Z=\emptyset $.

Let $U$ and $W$ be open sets so that $F':U\to W$ is a one-parameter unfolding of $f$ of the form $F'(x,t)=(f_t(x),t)$ and $f_t$ gives the disentanglement of $f$ for $t\neq 0$.
\begin{lemma}
\label{E1_for_F}
The $E_1$ terms of the image computing spectral sequence of $F'$ are
\[
E_1^{r,q}(F') \cong 
\left\{ 
\begin{array}{ll}
\Z ^{\binom{s(f)}{r+1} }, & {\mbox{ for }} q= 0 {\mbox{ and }} 1\leq r+1 \leq s(f) , \\
0, & {\mbox{otherwise.}}
\end{array}
\right.
\]
The sequence collapses at $E_2$ and 
\[
E_{\infty }^{r,q} (F') \cong E_2^{r,q} (F') \cong
\left\{ 
\begin{array}{ll}
\Z & {\mbox{ for }} (r,q)= (0,0)  , \\
0 & {\mbox{ for }} (r,q)\neq (0,0).
\end{array}
\right.
\]
\end{lemma}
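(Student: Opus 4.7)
The plan is to exploit that $F'$ is a germ of a finite map. Its image $F'(U)$ is a subanalytic germ at $(0,0)\in\C^p\times\C$, hence contractible, so Theorem~\ref{icssthm} forces convergence of the image computing spectral sequence to $\Z$ in degree $0$ and $0$ elsewhere. The claimed $E_\infty$ page then follows once we know the $E_1$ page, so the real content is the $E_1$ computation.

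For that, I would decompose $D^{r+1}(F')\subseteq U^{r+1}$ according to which branch $\tilde U_j$ each of the $r+1$ coordinates lies in, giving pieces indexed by ordered multi-indices $(j_1,\dots,j_{r+1}) \in \{1,\dots,s(f)\}^{r+1}$. Each piece is a subanalytic germ at the single point $((p_{j_1},0),\dots,(p_{j_{r+1}},0))$ (non-empty because all branches pass through the origin), and that point is fixed by the stabilizer $H=\prod_j S_{n_j}$ of the multi-index, where $n_j$ is the multiplicity of index $j$. By the conical structure of subanalytic germs, each piece is $H$-equivariantly contractible to its distinguished point, and these contractions can be assembled to give an $S_{r+1}$-equivariant deformation retraction of $D^{r+1}(F')$ onto the finite $S_{r+1}$-set of distinguished points.

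The computation of $H^{alt}_*(D^{r+1}(F');\Z)$ thereby reduces to the alternating homology of this finite $S_{r+1}$-set, which is concentrated in degree $0$. A standard Frobenius reciprocity computation shows that each $S_{r+1}$-orbit contributes a copy of $\mathrm{Hom}_H(\sign|_H,\Z)$, which is $\Z$ when $\sign|_H$ is trivial and $0$ otherwise. Triviality of $\sign|_{\prod_j S_{n_j}}$ forces all multiplicities $n_j\le 1$, so the surviving orbits are exactly the $(r+1)$-element subsets of $\{1,\dots,s(f)\}$, giving $\binom{s(f)}{r+1}$ copies of $\Z$ in degree $0$. This yields the stated $E_1$ page, and since $E_1$ is concentrated in a single row the collapse at $E_2$ and the stated $E_\infty$ page are forced by the convergence established in the first paragraph. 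The main technical obstacle will be arranging the radial contractions of the pieces to be compatible under the $S_{r+1}$-action at the cellular (chain) level rather than merely homotopically; this requires an equivariant subanalytic triangulation so that Frobenius reciprocity can be applied at the chain-complex level.
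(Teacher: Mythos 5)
Your argument is essentially the standard one, and it is in fact the content of the proof the paper merely points to: the paper's proof of Lemma~\ref{E1_for_F} is a one-line reference to Lemma~3.3 of \cite{excellent}, whose argument is exactly your decomposition of $D^{r+1}(F')$ into branch-labelled pieces, each a contractible (multi-)germ, followed by the $\sign|_H$ computation that kills every orbit whose label has a repeated entry and leaves one copy of $\Z$ in degree $0$ for each $(r+1)$-element subset of branches, giving $\Z^{\binom{s(f)}{r+1}}$. The one place where your route differs is the passage from $E_1$ to $E_2=E_\infty$: you feed in contractibility of the chosen representative of $\im F'$ (local conic structure of the analytic image germ) and let convergence in Theorem~\ref{icssthm} force the answer, whereas the argument the paper relies on obtains $E_2$ intrinsically by identifying the bottom row $\Z^{\binom{s}{1}}\leftarrow\Z^{\binom{s}{2}}\leftarrow\cdots$ with the augmented simplicial chain complex of an $(s-1)$-simplex, so that the lemma itself \emph{outputs} that $\im F'$ has the homology of a point (which is how it is quoted later in the proof of Theorem~\ref{dis_homology}); your version is not circular, since contractibility of the image representative is standard and independent of the lemma, but it makes that later quotation a direct input rather than a consequence. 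Two small points to tidy: your parenthetical ``non-empty because all branches pass through the origin'' is only valid for multi-indices with pairwise distinct entries (repeated-index pieces may be empty, which is harmless since they contribute $0$ anyway, but as stated the claim is wrong); and the equivariance issue you flag at the end is genuine but routine --- the central point of each piece is fixed by its stabilizer $H$, the distance to it is $H$-invariant, so the conic structure can be taken $H$-equivariant, and the equivariant (Whitney/subanalytic) triangulations needed to work at chain level are exactly those already invoked in \cite{loctop}.
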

\begin{proof}
The proof is essentially the same as the proof of Lemma~3.3 of \cite{excellent}, only minor modifications need be made.
\end{proof}

We now prove an integer homology version of Lemma~3.4 of \cite{excellent} that does not assume corank $1$.
\begin{lemma}
\label{E1sparse}
Suppose that $f:(\C ^n,S)\to (\C ^p,0)$, $n<p$, has an isolated instability at $0$ with a one-parameter unfolding $F'$ and disentanglement map $\widetilde{f}$. 
As before let $d(f)=\sup \{ k \,  | \, D^k(\widetilde{f}) \neq \emptyset \}$ and $s(f)$ be the number of branches of $f$.

Then,
\[
E_1^{r,q} (F',\widetilde{f}) =
\left\{
\begin{array}{ll}
H^{alt}_{\dim_\C D^{r+1}(\widetilde{f})+1}(D^{r+1}(F'), D^{r+1}(\widetilde{f});\Z ) , & 
{\mbox{for }} q=\dim_\C D^{r+1}(\widetilde{f})+1\geq 0 \\
 & {\mbox{ and }}r+1\leq d(f),\\
\Z ^{\binom{s(f)}{r+1}} , & q=0 {\mbox{ and }} r+1>d(f), \\
0, & {\mbox{otherwise.}} 
\end{array}
\right.
\]
Here we define $\dim \emptyset =-1$. 
\end{lemma}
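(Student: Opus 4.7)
The plan is to apply the long exact sequence in alternating homology of the pair $(D^{r+1}(F'), D^{r+1}(\widetilde{f}))$, combined with the absolute computation of $H^{alt}_*(D^{r+1}(F'))$ supplied by Lemma~\ref{E1_for_F}.

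First I would dispose of the case $r+1 > d(f)$. By definition of $d(f)$ the space $D^{r+1}(\widetilde{f})$ is empty, so the relative alternating chain complex coincides with the absolute one and $E_1^{r,q}(F',\widetilde{f}) = H^{alt}_q(D^{r+1}(F'))$. Lemma~\ref{E1_for_F} then yields $\Z^{\binom{s(f)}{r+1}}$ in degree zero and vanishing in positive degrees, matching the second clause of the formula (the binomial is interpreted as zero when $r+1 > s(f)$, consistently with $D^{r+1}(F') = \emptyset$ in that range).

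For the case $r+1 \leq d(f)$, Lemma~\ref{E1_for_F} gives $H^{alt}_q(D^{r+1}(F')) = 0$ for all $q \geq 1$, so the long exact sequence of the pair collapses to
\[
E_1^{r,q}(F',\widetilde{f}) \cong H^{alt}_{q-1}(D^{r+1}(\widetilde{f})) \mbox{ for } q \geq 2,
\]
together with a short exact tail controlling $E_1^{r,0}$ and $E_1^{r,1}$ through the kernel and cokernel of the inclusion-induced map $H^{alt}_0(D^{r+1}(\widetilde{f})) \to H^{alt}_0(D^{r+1}(F'))$. It therefore suffices to prove that $\widetilde{H}^{alt}_*(D^{r+1}(\widetilde{f}))$ is concentrated in the single degree $\dim_\C D^{r+1}(\widetilde{f})$, and that the above low-degree map is an isomorphism on $H^{alt}_0$; together these force a single non-zero $E_1^{r,q}$ sitting at $q = \dim_\C D^{r+1}(\widetilde{f}) + 1$.

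The principal obstacle is precisely this concentration statement for the integral alternating homology of $D^{r+1}(\widetilde{f})$. In the corank 1 setting used in \cite{excellent}, the Marar-Mond theorem gives $D^{r+1}(f)$ as a smooth complete intersection and so $D^{r+1}(\widetilde{f})$ is its Milnor fibre, immediately a wedge of spheres in top dimension with top-concentrated alternating homology. Without the corank 1 assumption I would instead view $D^{r+1}(\widetilde{f})$ as the generic fibre of the family $D^{r+1}(F') \to \C$ supplied by the unfolding parameter, exploit the Cohen-Macaulay property of multiple point spaces of finitely $\AA$-determined maps at the expected dimension, and adapt the Milnor-fibre-style argument of \cite{loctop} to upgrade the rational statement of \cite{excellent} to the integral top-concentration needed here.
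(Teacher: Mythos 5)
Your overall strategy --- the long exact sequence of the pair combined with Lemma~\ref{E1_for_F}, reducing everything to a vanishing statement for alternating homology of the multiple point spaces, with the case $r+1>d(f)$ disposed of by emptiness of $D^{r+1}(\widetilde{f})$ --- is exactly the route the paper takes; the paper merely phrases the key vanishing directly for the relative groups, namely $H^{alt}_i(D^k(F'),D^k(\widetilde{f});\Z)=0$ for $i\leq \dim_\C D^k(\widetilde{f})$ together with $H^{alt}_i(D^k(\widetilde{f});\Z)=0$ for $i>\dim_\C D^k(\widetilde{f})$, which via your exact sequence carries the same information. However, two concrete points in your reduction need repair. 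First, you ask for the inclusion-induced map on $H^{alt}_0$ to be an isomorphism; that is only the right demand when $\dim_\C D^{r+1}(\widetilde{f})\geq 1$. When $\dim_\C D^{r+1}(\widetilde{f})=0$ (isolated $(r+1)$-fold points, e.g.\ the top multiple point space when $p=n+1$), the non-zero $E_1$ term sits at $q=1$ and is precisely the kernel of $H^{alt}_0(D^{r+1}(\widetilde{f});\Z)\to H^{alt}_0(D^{r+1}(F');\Z)$; what must be proved there is surjectivity, not bijectivity, and your stated sufficient conditions would force $E_1^{r,1}=0$ in that case, which is false in general.

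Second, the appeal to ``the Cohen--Macaulay property of multiple point spaces of finitely $\AA$-determined maps'' is not available: Cohen--Macaulayness of the $D^k$ is a corank~$1$ theorem of Marar--Mond, and the entire purpose of this lemma is to drop the corank~$1$ hypothesis. What the argument actually needs, and what the paper's proof invokes, is only that $D^k(\widetilde{f})$ is a complex analytic set of the expected dimension $nk-p(k-1)$ (valid for finitely $\AA$-determined germs) together with the stratified, Milnor-fibration-style argument of Theorem~4.6 of \cite{loctop}, adapted so that the fibration over the unfolding parameter is a multi-germ fibration and one takes alternating homology of its fibres. That argument is already integral, so there is no rational statement from \cite{excellent} to upgrade; the genuine new content is the multi-germ adaptation, which is also where the extra bottom-row terms $\Z^{\binom{s(f)}{r+1}}$ come from.
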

\begin{proof}
The proof is similar to the proof of Theorem ~4.6 of \cite{loctop}. The main difference is that $D^k(F')$ is no longer connected and in general neither is $D^k(\widetilde{f})$. This means we have to be careful about the bottom row of the spectral sequence, i.e., $E^{r,0}_1$.

By reasoning similar to the proof of Theorem~4.6 in \cite{loctop} we have that 
\[
H^{alt}_i(D^k(F'), D^k(\widetilde{f});\Z )=0 {\text{ for }} i \leq nk-p(k-1) = \dim_\C D^k(\widetilde{f}) . 
\]
The main change in the proof is that the fibration $g$ is a multi-germ fibration and we take the alternating homology of the fibres of this map.

As in Theorem~4.6 of \cite{loctop} we have that $H^{alt}_i(D^k(\widetilde{f});\Z )=0$ for $i>\dim D^k(\widetilde{f}) $ and is free abelian for $i= D^k(\widetilde{f}) $. 

Therefore for $r+1\leq d(f)$ we have that 
\[
E^{r,q}_1(F',\widetilde{f} ) = 
H^{alt}_{\dim_\C D^{r+1}(\widetilde{f})+1}(D^{r+1}(F'), D^{r+1}(\widetilde{f});\Z ) {\text{ for }} q=\dim_\C D^{r+1}(\widetilde{f})+1
\]
and zero otherwise. If $r+1>d(f)$, then $D^{r+1}(\widetilde{f})$ is empty by definition of $d(f)$ and so 
\[
E^{r,q}_1(F',\widetilde{f} ) \cong E^{r,q}_1(F' ) 
\]
for all $q$. Therefore by Lemma~\ref{E1_for_F} we have 
\[
E^{r,q}_1(F',\widetilde{f} ) =
\Z ^{\binom{s(f)}{r+1}} 
\]
for $q=0$ and zero otherwise.
\end{proof}
We are now in a position to prove Theorem~\ref{dis_homology}.
\begin{lproof}{of Theorem~\ref{dis_homology}}
From Lemma~\ref{E1_for_F} and Lemma~\ref{E1sparse} we can see that the bottom row of the image computing spectral sequence for $(F',\widetilde{f})$ is exact except possibly at $E^{d(f),0}_1$. Hence,
$E^{r,0}_2(F',\widetilde{f})=0$ except possibly for $E^{d(f),0}(F',\widetilde{f})$ and this will be free abelian.

From Lemma~\ref{E1sparse} we can see that there are no other non-trivial differentials and so 
$E^{r,q}_2(F',\widetilde{f})\cong E^{r,q}_1(F',\widetilde{f})$ for all $r$ and all $q\neq 0$.

From the positions of the non-trivial groups in $E^{r,q}_2(F',\widetilde{f})$ we can see that the sequence collapses at $E_2$. Since there is no torsion there are no extension problems and we can deduce that 
\[
H_*(\im F', \im \widetilde{f} ;\Z ) ,
\]
where $\im $ denotes image, has non-trivial groups possible only in dimensions $p-(p-n-1)k$ for $2\leq k \leq d(f)$ and in  dimension $d(f)$ if $s(f)>d(f)$.

Since the image of $F'$ has the homology of a point by Lemma~\ref{E1_for_F} we deduce that 
$H_*(\dis (f);Z ) = H_*(\im \widetilde{f} ;\Z )$ has the homology described in the statement of the theorem.
\end{lproof}

\section{The homotopy type of the disentanglement}
\label{sec:htpy}
In this section we use a little trick to show that in a large number of cases the homotopy type of the disentanglement of a multi-germ with $n<p$ is homotopically a wedge of spheres of varying dimensions.

\begin{theorem}
\label{mainthm}
Suppose that $f:(\C ^n,S)\to (\C ^p,0)$, $n<p$, is a multi-germ with an isolated instability 
at $0$ and $s(f)\leq d(f)$. Then $\dis (f)$ is homotopically equivalent
to a wedge of spheres where the possible (real) dimensions are $p-(p-n-1)k-1$ for all $2\leq k \leq d(f)$.
\end{theorem}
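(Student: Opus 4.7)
The plan is to upgrade the homology computation from Theorem~\ref{dis_homology} to a homotopy equivalence by building an explicit wedge of spheres and invoking Whitehead's theorem. Under the hypothesis $s(f)\le d(f)$, the ``exceptional'' summand in $\widetilde H_{d(f)-1}$ that Theorem~\ref{dis_homology} allows when $s(f)>d(f)$ is absent, so $\widetilde H_*(\dis(f);\Z)$ is free abelian and concentrated exactly in the claimed dimensions $p-(p-n-1)k-1$ for $2\le k\le d(f)$.

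Applying Whitehead requires $\dis(f)$ to be simply connected together with a comparison map from a wedge of spheres of the correct dimensions inducing an isomorphism on $H_*(-;\Z)$. For simple connectivity, in the generic case the smallest possible non-zero reduced homology dimension is at least $2$; combined with the decomposition $\dis(f)=\bigcup_j \widetilde f(U_j)$ and a van Kampen argument this gives $\pi_1=0$, using that each $\widetilde f(U_j)$ is a mono-germ disentanglement and hence a simply connected wedge of spheres by \cite{loctop,vancyc}. Any low-dimensional edge cases can be checked directly from the formula.

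The ``nice trick'' referred to in the introduction should be an explicit geometric realization of the generators of the spectral sequence in Lemma~\ref{E1sparse}. Because $s(f)\le d(f)$ kills the bottom-row contribution for $r+1>d(f)$, the $E_1$ page sits entirely on the curve $q=\dim_\C D^{r+1}(\widetilde f)+1$ with $2\le r+1\le d(f)$, and each $H^{alt}_{\dim_\C D^{r+1}(\widetilde f)+1}(D^{r+1}(F'),D^{r+1}(\widetilde f);\Z)$ is free abelian with generators represented by alternating sums of fundamental classes of components of the top-dimensional stratum of $D^{r+1}(\widetilde f)$. Pushing these classes forward through $\widetilde f\circ\varepsilon_{r+1,r}\circ\cdots\circ\varepsilon_{2,1}$ produces cycles in $\dis(f)$ of the dimension dictated by Theorem~\ref{dis_homology}, which after a general-position perturbation can be represented by embedded spheres. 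Taking the wedge of these spheres and mapping in gives the desired comparison map $\varphi\colon W\to\dis(f)$; by construction $\varphi$ hits every generator of $E_\infty$, and a rank comparison with Theorem~\ref{dis_homology} then forces $\varphi_*$ to be an isomorphism.

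The main obstacle I expect is the geometric realization step: the cycles naturally produced on the multiple-point spaces must be pushed forward compatibly with the $S_{r+1}$-action and produce honest spheres in $\dis(f)$ without spurious identifications between different values of $k$, which will require careful dimension counting and a transversality argument that exploits $s(f)\le d(f)$ to avoid lower-dimensional obstructions. Establishing simple connectivity cleanly in the low-dimensional edge cases, without circular appeal to the statement being proved, is a secondary but nontrivial point.
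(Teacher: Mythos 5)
Your end-game (free homology in the stated degrees, simple connectivity, Whitehead) matches the closing step of the paper's argument, but the two steps that carry all the weight are not established, and the first is based on a false premise. You propose to realize the generators of $E_\infty$ by pushing forward fundamental classes of strata of $D^{r+1}(\widetilde f)$ and then, after a ``general-position perturbation,'' representing them by embedded spheres, so as to get a map from a wedge of spheres inducing a homology isomorphism. There is no reason this can be done: $\dis(f)$ is a singular space, transversality arguments do not apply to it in the way you suggest, and even for a smooth simply connected space homology classes above the Hurewicz degree need not be spherical at all (the generator of $H_4(\C P^2;\Z)$ is the standard example), so ``free abelian homology in the right degrees plus $\pi_1=0$'' does not by itself yield a wedge of spheres. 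Your own last paragraph defers exactly this point, which is the entire content of the theorem. Moreover the hypothesis $s(f)\le d(f)$ is not used in the paper to ``avoid lower-dimensional obstructions'' in a transversality argument; it is used (Lemma~\ref{findy}) to produce a single point $y$ whose fibre meets every branch $U_j$. The paper's trick is then to attach intervals to the $U_j$ at points of $\widetilde f^{-1}(y)$ and glue their free ends, making the source $X^+$ contractible, and to attach a corresponding interval to $Y=\dis(f)$, obtaining $f^+:X^+\to Y^+$. The multiple point spaces of $f^+$ differ from those of $\widetilde f$ by cones $V_k$ with vanishing alternating homology, so an alternating Mayer--Vietoris argument shows $E_1(f^+)$ is concentrated in one row per column and degenerates at $E_1$; the wedge-of-spheres conclusion for $Y^+$ then comes from the inductive argument of Theorem~4.24 of \cite{loctop}, not from realizing cycles by spheres. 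Finally a comparison of the spectral sequences of $\widetilde f$ and $f^+$ shows the inclusion $Y\to Y^+$ is a homology isomorphism, and Whitehead's theorem finishes. None of this machinery is present in your proposal.

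The simple connectivity step is also not sound as written. A van Kampen argument over the pieces $\widetilde f(U_j)$ fails because the pairwise intersections can be disconnected (two simply connected sets glued along two points already contain a circle), and ``the smallest non-zero homology dimension is at least $2$ generically'' is not an argument for $\pi_1=0$. Indeed the case where $\dis(f)$ has homology in degree $1$ is genuinely possible and the paper must treat it separately: it quotes Proposition~4.21 of \cite{loctop} for simple connectivity of $Y$ when applicable, and when $Y^+$ is a wedge of circles it argues directly from the construction of the spectral sequence (via \cite{go,icss}) that $Y$ is also a wedge of circles with the same count. As it stands, your proposal restates the target and the known homology input (Theorem~\ref{dis_homology}) but supplies neither the construction that makes the spectral sequence degenerate nor a valid route to simple connectivity, so it does not constitute a proof.
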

To prove this theorem we shall construct a map, the image of which is homotopically equivalent to the disentanglement of $f$ and such that its image computing spectral sequence is much simpler.
\begin{lemma}
\label{findy}
Suppose that $f:(\C ^n,S)\to (\C ^p,0)$, $n<p$, is a multi-germ with an isolated instability 
at $0$ and that $\widetilde{f}:\coprod_{j=1}^{s(f)}  U_j \to \C ^p$ is a stabilization giving the disentanglement.

If $s(f)\leq d(f)$, then there exists $y\in \C ^p$ such that $\widetilde{f}^{-1}(y)\cap U_j\neq \emptyset $ for all $j=1, \dots , s(f)$.
\end{lemma}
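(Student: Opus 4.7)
The plan is to find, in the parameter space $\C^b$ of the versal unfolding $F$, a value $t_0$ close to $0$ but outside the bifurcation locus $\Sigma$, together with a choice of one preimage $x_j \in U_j$ for each branch, such that $f_{t_0}(x_1) = \cdots = f_{t_0}(x_{s(f)})$. The desired $y$ will then be this common value.

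First I would introduce the auxiliary analytic set
\[
V \;=\; \bigl\{ (x_1, \ldots, x_{s(f)}, t) \in U_1 \times \cdots \times U_{s(f)} \times \C^b \;:\; f_t(x_1) = \cdots = f_t(x_{s(f)}) \bigr\},
\]
cut out in a source of complex dimension $ns(f)+b$ by $p(s(f)-1)$ analytic equations. Every irreducible component of $V$ therefore has complex dimension at least $b + p - s(f)(p-n)$. The nonemptiness of $D^{s(f)}(\widetilde{f})$, which is guaranteed by $s(f) \leq d(f)$, forces $p - s(f)(p-n) \geq 0$ via the formula $\dim_\C D^{s(f)}(\widetilde{f}) = p - s(f)(p-n)$, so every component of $V$ has dimension at least $b$. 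Moreover the point $((s_1, \ldots, s_{s(f)}), 0)$ lies in $V$, because each $s_j \in U_j$ and $f(s_j)=0$. I would then single out the irreducible component $V_0 \subseteq V$ through this point and study the projection $\pi: V_0 \to \C^b$.

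Next I would argue that $\pi$ is dominant. Once this is in place, $\pi(V_0)$ contains a Zariski-open subset of a neighbourhood of $0 \in \C^b$, and since $\C^b \setminus \Sigma$ is open and dense there will be arbitrarily small $t_0 \in \pi(V_0) \cap (\C^b \setminus \Sigma)$. Any preimage of such a $t_0$ in $V_0$ supplies the required configuration and produces $y = f_{t_0}(x_1)$.

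The main obstacle is establishing dominance of $\pi$, because the dimension count $\dim V_0 \geq b$ alone is compatible with $V_0$ lying fibre-wise over a proper subvariety of $\C^b$. I would remove this possibility by appealing to the versality of $F$: a versal unfolding realises every infinitesimal deformation, so no analytic obstruction can force the branch-crossing multi-point configuration based at $(s_1, \ldots, s_{s(f)})$ to collapse outside a proper subvariety. In the corank $1$ or topologically stable settings one can alternatively observe that the defining equations of $V$ form a regular sequence on the (Cohen--Macaulay) multiple-point scheme, so that $\pi$ is a flat family of pure relative dimension $p - s(f)(p-n) \geq 0$; the nonempty central fibre then propagates to nonempty fibres over all nearby $t$ by conservation of intersection multiplicity, completing the proof.
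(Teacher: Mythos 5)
Your reduction of the lemma to the dominance of the projection $\pi:V_0\to\C^b$ is fine as far as it goes (the incidence variety, the lower bound $\dim V_0\ge b+p-s(f)(p-n)$, and the central point $(s_1,\dots,s_{s(f)},0)$ are all correct), but the dominance itself --- which you rightly identify as the main obstacle --- is exactly the content of the lemma, and neither of your two suggested justifications actually proves it. The appeal to versality (``a versal unfolding realises every infinitesimal deformation, so no analytic obstruction can force the configuration to collapse outside a proper subvariety'') is an assertion, not an argument: versality concerns deformations of the map-germ and says nothing directly about whether a stable perturbation possesses points hit by all $s(f)$ branches. Note also that in your write-up the hypothesis $s(f)\le d(f)$ is used only to conclude the numerical inequality $p-s(f)(p-n)\ge 0$, hence $\dim V_0\ge b$; but $\dim V_0\ge b$ is perfectly compatible with $\pi(V_0)$ lying inside the bifurcation set $\Sigma$ (with positive-dimensional fibres over it), so no contradiction is reached. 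The alternative Cohen--Macaulay/flatness route is also not carried out: it needs $V$ (or the relevant multi-branch multiple point scheme of the unfolding) to have \emph{exactly} the expected dimension so that the equations form a regular sequence, plus constancy of fibre dimension over $\C^b$ to invoke flatness --- none of which follows from your inequality --- and the Marar--Mond-type theorems you would lean on are corank $1$ results, whereas the paper is explicitly working without a corank $1$ assumption.

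The paper's own proof is homological rather than geometric and uses the hypothesis in an essentially different way: by Lemma~\ref{E1sparse}, $s(f)\le d(f)$ gives $H^{alt}_0(D^{s(f)}(F'),D^{s(f)}(\widetilde{f});\Z)=0$, while Lemma~\ref{E1_for_F} gives $H^{alt}_0(D^{s(f)}(F');\Z)=\Z$; hence the inclusion $D^{s(f)}(\widetilde{f})\hookrightarrow D^{s(f)}(F')$ induces a surjection on alternating $H_0$, so some point $z=(z_1,\dots,z_{s(f)})\in D^{s(f)}(\widetilde{f})$ has alternating class mapping to the generator, and this forces the coordinates $z_j$ to pair bijectively with the branches $U_l$; one then takes $y=\widetilde{f}(z_j)$. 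This step --- ruling out that all $s(f)$-fold points of the stable perturbation involve repeated branches --- is precisely what your dimension count and versality remark do not address, so as it stands the proposal has a genuine gap at its key step.
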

\begin{proof}
As before suppose that $F'$ is one-parameter unfolding of $f$. By Lemma~\ref{E1sparse} we know that 
\[
H^{alt}_0(D^k(F'), D^k(\widetilde{f});\Z )=0 
\]
for all $k\leq d(f)$. In particular, $H^{alt}_0(D^{s(f)}(F'), D^{s(f)}(\widetilde{f});\Z )=0$.
From Lemma~\ref{E1_for_F} we know that $H^{alt}_0(D^{s(f)}(F');\Z )=E^{s(f)-1,0}_1(F')=\Z $.
This implies that the natural inclusion of $D^{s(f)}(\widetilde{f})$ into $D^{s(f)}(F')$ induces a surjection
\[
H^{alt}_0(D^{s(f)}(\widetilde{f} );\Z ) \to H^{alt}_0(D^{s(f)}(F');\Z )
\]
(which in fact is an isomorphism if $\dim D^{s(f)}(\widetilde{f} ) >0$).

Thus there exists a point $z$ in $D^{s(f)}(\widetilde{f} )$ so that its orbit produces a generator of 
$H^{alt}_0(D^{s(f)}(\widetilde{f} );\Z )$ which maps to the generator of 
$H^{alt}_0(D^{s(f)}(F');\Z )$. We can choose this $z$ so that it is not in the image of the map $\varepsilon _{s(f)+1,s(f)}$ defined in Theorem~\ref{icssthm}.
This is because this image will be of smaller dimension and hence will not disconnect $D^{s(f)}(\widetilde{f})$. 

If $z=\left( z_1, z_2, \dots ,z_{s(f)} \right) \in \left( \coprod_{l=1}^{s(f)}  U_l \right) ^{s(f)}$, then as the (alternating) orbit of this point generates  $H^{alt}_0(D^{s(f)}(F');\Z )$ we must have that each $z_j$ and $U_l$ can be uniquely paired. Since $z\in D^{s(f)}(\widetilde{f})$, there exists a $y\in \C ^p$ such that $\widetilde{f}(z_j)=y$ for $1\leq j \leq s(f)$.
In other words we have shown that there exists $y\in \C ^p$ such that $\widetilde{f}^{-1}(y)\cap U_j\neq \emptyset $ for all $j=1, \dots , s(f)$.
\end{proof}

For a map $g:X\to Y$, let $M_k(g)$ be the image of the map from $D^k(g)$ to $Y$ given by $(x_1,\dots , x_k)\mapsto f(x_1)$.

Consider $F'$ the one-parameter unfolding of $f$ and let $f_t$ be a family of maps so that $f_0=f$ and  $f_t$ is a disentanglement map for $0<t\leq 1$.  
The condition $s(f)\leq d(f)$ in the statement of the theorem implies via the preceding lemma that $M_{s(f)}(\widetilde{f})$ is non-empty as it contains $y$. (Note that the proof gives that $y\in M_{s(f)}(\widetilde{f})\backslash M_{s(f)+1}(\widetilde{f})$.) This implies that $M_{s(f)}(f_t)$ must be non-empty for $t\neq 0$ also. Thus $M_{s(f)}(F')$ is at least one dimensional and as it is an analytic subspace of $\C ^p \times \C ^b$ it is path connected. Since this set must pass through the origin in $\C ^p \times \C ^b$ and from the path connectedness we know that  there exists a path $\alpha :[0,1]\to M_{s(f)}(F')$ such that 
$\alpha (0)=0$, 
$\alpha (t)\in M_{s(f)}(f_t)\backslash M_{s(f)+1}(f_t)$, and 
$\alpha (1)=y\in M_{s(f)}(\widetilde{f})\backslash M_{s(f)+1}(\widetilde{f})$.

Let $Y=\dis (f)$ and $Y^+ = (Y\coprod [0,1]) /\sim $
where $\sim $ is the identification of $y\in Y$ and $1\in [0,1]$.
Then obviously $Y$ is homotopically equivalent to $Y^+$ as we have merely attached
an interval to $Y$. 

We shall attach intervals to each $U_j$ and then 
identify their free ends to a single point. Let $x_j$ be a point $\widetilde{f}^{-1}(y)\cap U_j$ that can be used in generating $H^{alt}_0(D^{s(f)}(\widetilde{f} );\Z )$ as above and
let $X^+ _j= (U_j \coprod [0,1])/(x_j \sim 1)$.
Next, let $X^+ =(\coprod X^+_j )/\sim $, where
$\sim $ is the identification of all the origins of the intervals
in the copies of $[0,1]$. 
This construction is such that $X^+$ is homotopically equivalent to a wedge of all the $U_j$.

Via straightforward inclusion we can consider $X$, $U_j$, etc.\ as subsets of $X^+ $.
Define $f^+ :X^+ \to Y^+ $ by 
\[
\left\{
\begin{array}{l}
f^+ |\coprod U_j := \widetilde{f} \\
f^+ |X^+_j \backslash U_j := \alpha (t) {\text{ for all }} j=1,\dots , s(f) {\text{ and }} t\in [0,1] .
\end{array}
\right.
\]
It is easy to see that $f^+$ is a continuous map defined at all points of
$X^+$ and its image is $Y^+$.

\begin{lemma}
\label{mainlemma}
The set $Y^+$ is homotopically equivalent to a wedge of spheres with possible dimensions as in Theorem~\ref{mainthm}.
\end{lemma}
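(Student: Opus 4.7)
The plan is to apply the image computing spectral sequence of Theorem~\ref{icssthm} to $f^+:X^+\to Y^+$. The construction is arranged so that (i) $X^+$ is contractible (a wedge of the contractible pieces $X^+_j$), and (ii) the multiple point spaces $D^k(f^+)$ have alternating homology concentrated in a single degree. Together these force the spectral sequence to be so sparse that it directly displays $Y^+$ as a wedge of spheres.

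First I would identify $D^k(f^+)$ explicitly. For $k\leq s(f)$, each ordered $k$-tuple $(j_1,\dots,j_k)$ of distinct indices from $\{1,\dots,s(f)\}$ produces an arc in $D^k(f^+)$ running from the wedge point $(*,\dots,*)$ to $(x_{j_1},\dots,x_{j_k})\in D^k(\widetilde{f})$. These $s(f)!/(s(f)-k)!$ arcs form a contractible cone $W_k$ over the set $\Gamma_k$ of all such $k$-tuples, glued to $D^k(\widetilde{f})$ along $\Gamma_k$. Hence $D^k(f^+)\simeq D^k(\widetilde{f})/\Gamma_k$ equivariantly, and $D^k(f^+)=D^k(\widetilde{f})$ for $k>s(f)$.

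Second, I would compute $H^{alt}_*(D^k(f^+);\Z)$ using the cofibration long exact sequence for $\Gamma_k\hookrightarrow D^k(\widetilde{f})$. The analysis underlying Lemma~\ref{E1sparse} gives $H^{alt}_*(D^k(\widetilde{f});\Z)$ as free abelian and concentrated in degrees $0$ and $\dim_\C D^k(\widetilde{f})$; Lemma~\ref{findy} together with propagation through the maps $\varepsilon_{r+1,r}$ identifies the alternating classes of the orbits making up $\Gamma_k$ as a generating set for $H^{alt}_0(D^k(\widetilde{f});\Z)$. Hence quotienting by $\Gamma_k$ annihilates the bottom row entirely, and $H^{alt}_*(D^k(f^+);\Z)$ becomes free and non-zero only in the top degree $\dim_\C D^k(\widetilde{f})$.

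Feeding this into the image computing spectral sequence of $f^+$, the contractibility of $X^+$ kills $E_1^{0,q}$ for $q>0$, while the calculation above leaves a single non-zero entry per column $r\geq 1$, all of total degree $p-(p-n-1)(r+1)-1$. The resulting sparsity forces collapse at $E_1$, so $\widetilde{H}_*(Y^+;\Z)$ is free and concentrated in exactly the claimed dimensions. The final passage from homology to homotopy type would use that the spectral sequence collapse corresponds to a CW decomposition of $Y^+$ with cells only in those dimensions and that the attaching maps are null-homotopic: this is the main obstacle, since it must be established from the geometry of the multiple point stratification rather than by a purely homological argument, and extending the alternating-homology generating statement of Lemma~\ref{findy} from $k=s(f)$ to all $k\leq s(f)$ will be a delicate part of the setup.
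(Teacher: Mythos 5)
Your outline follows the paper's own route (the image computing spectral sequence for $f^+$, the identification of $D^k(f^+)$ as $D^k(\widetilde{f})$ with a cone over the finite orbit set $\Gamma_k$ attached for $k\leq s(f)$ and $D^k(f^+)=D^k(\widetilde{f})$ for $k>s(f)$, concentration of alternating homology in one degree, and collapse at $E_1$), but the two points you yourself flag as ``delicate'' and ``the main obstacle'' are genuine gaps, and the second is the actual content of the lemma. On the first: it is not enough that the alternating classes of $\Gamma_k$ \emph{generate} $H^{alt}_0(D^k(\widetilde{f});\Z)$; if the map $H^{alt}_0(\Gamma_k;\Z)\to H^{alt}_0(D^k(\widetilde{f});\Z)$ had a kernel, the cofibration (or Mayer--Vietoris) sequence would deposit that kernel in $H^{alt}_1(D^k(f^+);\Z)$ and destroy the one-degree concentration you need for degeneration, so you need injectivity in all cases and bijectivity when $\dim_\C D^k(\widetilde{f})>0$. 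Moreover ``propagation through $\varepsilon_{r+1,r}$'' is not the mechanism that delivers this. The statement follows instead by comparison with the unfolding: Lemma~\ref{E1sparse} gives $H^{alt}_i(D^k(F'),D^k(\widetilde{f});\Z)=0$ for $i\leq\dim_\C D^k(\widetilde{f})$, so for $\dim_\C D^k(\widetilde{f})>0$ the inclusion induces $H^{alt}_0(D^k(\widetilde{f});\Z)\cong H^{alt}_0(D^k(F');\Z)\cong\Z^{\binom{s(f)}{k}}$ by Lemma~\ref{E1_for_F}, and the $S_k$-orbits of the $k$-tuples of the chosen points $x_j$ (one in each branch, all mapping to $y$) are carried to a basis of the latter group; when $\dim_\C D^k(\widetilde{f})=0$ only injectivity is needed and it follows the same way. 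This is exactly the (tersely stated) role of the set $P_k$ in the paper's Mayer--Vietoris argument, so the extension from $k=s(f)$ to all $k$ is not an open delicacy --- but it has to be argued via $F'$, not via the $\varepsilon$ maps.

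The more serious gap is the passage from the homology computation to the homotopy statement, which you leave unresolved. This step cannot be extracted from the sparsity of the spectral sequence alone: a simply connected space with free abelian homology concentrated in prescribed dimensions need not be a wedge of spheres (consider $\C P^2$), so asserting that ``the attaching maps are null-homotopic'' is precisely what has to be proved, and it requires geometric input beyond the $E_1$-page. The paper closes this by invoking the reasoning of Theorem~4.24 of \cite{loctop}: with every surviving entry sitting in a single row per column, the image $Y^+$ is analysed through the multiple-point filtration exactly as in the mono-germ case, with Proposition~4.21 of \cite{loctop} supplying simple connectivity (used in the proof of Theorem~\ref{mainthm}) and a separate elementary argument covering the case where the only reduced homology is in degree one, so that $Y^+$ is then a wedge of circles. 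Without importing that argument, or an equivalent one, your proposal establishes only the homological half of the lemma, which is essentially the content of Theorem~\ref{dis_homology} rather than of Lemma~\ref{mainlemma}.
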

\begin{proof}
Although the statement of Theorem~\ref{icssthm} was for subanalytic maps the image computing spectral sequence exists for $f^+$ since $\widetilde{f}$ is a complex analytic map and the extra bits we add on to get $f^+$ are particularly simple, see \cite{icss} for details.
That is, there exists a spectral sequence,
\[
E_1^{r,q} (f^+) = H^{alt}_q(D^{r+1}(f^+);\Z ) \implies 
H_*(Y^+;\Z ).
\]
We will show that the $E^1$ page of this sequence is particularly sparse and we
shall compare it with the image computing spectral sequence for $\widetilde{f}$.

The obvious inclusions $h:\widetilde{Y} \to Y^+$ and $h_1:\widetilde{X} \to X^+$ lead to inclusions $h_k:D^k(\widetilde{f}) \to D^k(f^+)$ for all $k\geq 2$.

As $\widetilde{f}$ and $F$ are complex analytic, the multiple point spaces $D^k(\widetilde{f})$ are Whitney stratifiable and hence triangulable. This triangulation can be chosen such that we can construct $D^k(f^+)$ by attaching a number of $1$-cells to $0$-cells in $D^k(\widetilde{f})$ and then identifying the ends of the $1$-cells to a single $0$-cell which is invariant under the action of $S_k$. (Note that the action of $S_k$ on the interior of the $1$-cells is the same as the one on the $0$-cells in $D^k(\widetilde{f})$, i.e., the ends of the $1$-cells that are not fixed by the $S_k$-action.) 
For each $k$ define $V_k$ to be the closed cellular complex given by these $1$-cells and $0$-cells.

In summary we have $D^k(f^+) = D^k(\widetilde{f}) \cup V_k$ where $D^k(\widetilde{f}) \cap V_k$ is a finite collection of $0$-cells. Call this intersection $P_k$. (Essentially these cells will arise from the orbit of $k$-tuples of the points $z_j$ in the proof of Lemma~\ref{findy}.) 

We can use an alternating homology version of the Mayer-Vietoris sequence to get a long exact sequence:
\begin{equation*}
\begin{split}
 & \dots
\rightarrow H^{alt}_n(D^k(\widetilde{f}) \cap V_k ;\Z )
\rightarrow H^{alt}_n(D^k(\widetilde{f}) ;\Z ) \oplus H^{alt}_n( V_k ;\Z ) \qquad \\
&\qquad \qquad  \rightarrow H^{alt}_n(D^k(f^+);\Z )
\rightarrow H^{alt}_{n-1}(D^k(\widetilde{f}) \cap V_k ;\Z )
\rightarrow \dots 
\end{split}
\end{equation*}
The alternating homology of $V_k$ is zero for all $k>1$. This is because obviously no $1$-cell can be a cycle and because all the $0$-cells are homologous to the $S_k$-invariant $0$-cell. This latter fact implies that the zeroth alternating homology group is zero by Lemma~2.6(iii) of \cite{loctop}.

Therefore the Mayer-Vietoris sequence gives the long exact sequence
\[
\dots \rightarrow H^{alt}_n(P_k ;\Z )
\rightarrow H^{alt}_n(D^k(\widetilde{f}) ;\Z )
\rightarrow H^{alt}_n(D^k(f^+);\Z )
\rightarrow H^{alt}_{n-1}(P_k ;\Z )
\rightarrow \dots 
\]
The map $H^{alt}_0(P_k ;\Z )\rightarrow H^{alt}_0(D^k(\widetilde{f}) ;\Z )$ is injective since the points in $P_k$ were chosen to be generators of $H^{alt}_0(D^k(\widetilde{f}) ;\Z )$. (If $D^k(\widetilde{f})$ is not $0$-dimensional, then it is in fact a bijection.)

Therefore we conclude that for $k\geq 2$.
\[
H^{alt}_n(D^k(f^+);\Z )= \left\{ 
\begin{array}{ll}
H^{alt}_n(D^k(\widetilde{f}) ;\Z ) ,& {\text{for }} n = \dim_\C  D^k(\widetilde{f}) , \\
0, & {\text{otherwise}} .
\end{array}
\right.
\]
If $k=1$, then  $D^k(f^+)$ is homotopically equivalent to the wedge of the $U_j$ and so $H_*(D^k(f^+);\Z )$ is just the ordinary homology of a point.

From this we deduce that 
\[
E_1^{r,q} (f^+) = \left\{ 
\begin{array}{ll}
\Z ^{\mu _r}  ,& {\text{for }} q = \dim_\C  D^{r+1}(\widetilde{f}) , r>1,{\text{ some }} \mu _r \in \N \cup \{ 0 \} ,\\
\Z & {\text{for }} (r,q)=(0,0) \\
0, & {\text{otherwise}} .
\end{array}
\right.
\]
Therefore the image computing spectral sequence for $f^+$ degenerates at the first page, i.e., $E_m^{r,q}(f^+) = E_1^{r,q}(f^+)$ for all $m\in\N $. Since the groups are free abelian we can read off the homology of $Y^+$ from this sequence, noting that the possible dimensions with non-zero homology are those as in Theorem~\ref{mainthm}.

Then using the same reasoning as in Theorem~4.24 of \cite{loctop} for the mono-germ case we can deduce that $Y^+$ is homotopically equivalent to a wedge of spheres. Note that, if $Y$ has non-trivial homology in dimension $1$, then this implies that $Y$ is homotopically a wedge of circles and if $n<p-1$, then this is the only situation where $Y$ is not simply connected. (If $n=p-1$, then we already know from \cite{vancyc} that the disentanglement is homotopically a wedge of spheres.)
\end{proof}

We now complete the proof of the theorem.
\begin{lproof}{of Theorem~\ref{mainthm}}
All we have to do now is show that $Y$ and $Y^+$ have the same homotopy type. To do this we shall compare the spectral sequences for $f^+$ and $\widetilde{f}$.

We saw in the preceding lemma that the natural inclusion $h_k:D^k(\widetilde{f}) \to D^k(f^+)$ induces an isomorphism 
\[
H^{alt}_n(D^k(\widetilde{f});\Z ) \to H^{alt}_n( D^k(f^+);\Z  )
\]
for all $n\geq 1$ and $k\geq 1$. 

From the proof of Lemma~\ref{E1sparse} we can see that the bottom row of the spectral sequence for $\widetilde{f}$ may have non-zero terms, i.e., $E_1^{r,0}(\widetilde{f})\cong E_1^{r,0}(F')$ for $r+1\leq d(f)$ and $E_1^{r,0}(F')$ may be non-zero by Lemma~\ref{E1_for_F}. However, on the second page of the spectral sequence we have $E_1^{0,0}(\widetilde{f})=\Z $ and only one other possible non-zero term at $E_1^{s(f),0}(\widetilde{f})$ (when $s(f)>d(f)$), see the proof of Theorem~\ref{dis_homology}

Therefore the map $E_2^{r,q} (\widetilde{f}) \to E_2^{r,q} (f^+)$ is an isomorphism for all $r$ and $q$.  
Since the sequence for $f^+$ degenerates at the first page and the one for $\widetilde{f}$ degenerates at the second page this implies that the natural map $h:Y\to Y^+$ induces an isomorphism on homology. 

Now, by Proposition~4.21 of \cite{loctop} we have that $Y$ is simply connected. (The statement there is for a complex analytic map - a condition that a wayward Latex macro changed to a German double S.)  By Lemma~\ref{mainlemma} and the comment at the end of its proof we have that $Y^+$ is simply connected or is homotopically a wedge of circles. Therefore, in the former case, by Whitehead's Theorem (\cite{switzer} p 187) the map $h$ induces an isomorphism on all homotopy groups. Since both spaces are triangulable this implies that they are homotopically equivalent, (\cite{switzer} p 187). In the latter we note from the construction of the spectral sequence, see \cite{go} or \cite{icss}, that $Y$ is homotopically equivalent to a space constructed from the contractible $U_j$ by adding $1$-cells that correspond to double points. Hence $Y$ is also homotopically a wedge of circles. Now we merely note that this number of circles is the same as for $Y^+$ as $h$ induces an isomorphism on homology. Hence $Y$ and $Y^+$ are homotopically equivalent. 
\end{lproof}

\begin{remarks}
\begin{enumerate}
\item The class of germs in the theorem with $s(f)\leq d(f)$ is very large. Obviously, the condition trivially holds for mono-germs. Now consider a bi-germ $f$ with branches $f_1$ and $f_2$. If any branch is not an immersion, say $f_1$, then $D^2(f_1)\neq \emptyset $. Now if the dimension of $D^2(f)$ is greater than $1$, then as $f_1$ and $f_2$ will generally meet transversally (outside the origin), the intersection of the double points of $f_1$ and the image of $f_2$ will be non-empty. Thus we have a triple point. Again depending on dimension, this means that the disentanglement has a triple point. That is, $d(f)\geq 3$. Similar reasoning holds for any $s(f)$.

\item We can construct maps with isolated instabilities such that $s(f)\geq d(f)$ and their difference is any arbitrary number. Consider $s$ lines in the complex plane all meeting at the origin. This is the image of a map with an isolated instability. The only type of singularity occurring in the disentanglement is an ordinary double point. Hence $d(f)=2$ but $s(f)=s$ is arbitrary.

Note though that in this case the disentanglement is still homotopically equivalent to a wedge of spheres.

\item If $s(f)>d(f)$, then Theorem~\ref{dis_homology} gives us the hope that the disentanglement is homotopically a wedge of spheres. As an example consider the ordinary quadruple surface point in three-space, that is, the image of four $2$-planes into $\C ^3$ such that the intersection of planes is pairwise transverse. It is easy to calculate that $s(f)=4$ but that $d(f)=3$ and that the disentanglement is homotopically equivalent to a single $2$-sphere. 

\item If we have a sufficiently generic finite complex analytic map from a complete intersection of dimension $n$ to $\C ^p$, $n<p$, then it is possible to show that the complex links of strata have free abelian integer homology, see Corollary~4.17 of \cite{loctop}. It should be possible to use reasoning similar to the proof of Theorem~\ref{mainthm} to show that a large class of the complex links are homotopically equivalent to a wedge of spheres of varying dimensions.
\end{enumerate}
\end{remarks}

\end{document}